\newcommand{\T}{\mathbb{T}}
\newcommand{\D}{\mathbb{D}}
\newcommand{\C}{\mathbb{C}}
\newcommand{\TT}{\mathbb T}
\newcommand{\bA}{{\bf{A}}}
\newcommand{\dist}{\mbox{dist} \,}
\newtheorem{theo}{Theorem}
\newtheorem{coro}{Corollary}
\newtheorem{lemma}{Lemma}
\newtheorem{rema}{Remarks}
\title[Cyclicity in Besov-Dirichlet spaces from the Corona Theorem]{Cyclicity in Besov-Dirichlet spaces from the Corona Theorem}
\author[Y. Egueh]{{Yabreb}  {Egueh}}
\address{Univ. Bordeaux\\ IMB, UMR 5251 F-33400 Talence\\ France \\ CNRS, IMB, UMR 5251, F-33400 Talence, France.}
\email{yabreb@math.u-bordeaux.fr}
\author[K . Kellay]{{Karim}  {Kellay}} 
\address{Univ. Bordeaux\\ IMB, UMR 5251 F-33400 Talence\\ France \\ CNRS, IMB, UMR 5251, F-33400 Talence, France.}
\email{kkellay@math.u-bordeaux.fr}
\author[M. Zarrabi]{{Mohamed} {Zarrabi}}
\address{Univ. Bordeaux\\ IMB, UMR 5251 F-33400 Talence\\ France \\ CNRS, IMB, UMR 5251, F-33400 Talence, France.}
\keywords{Outer function, Corona Theorem, Cyclicity, Besov-Dirichlet spaces}
\subjclass{43A15, 28A12, 42A38.}
\begin{document}
\begin{abstract}
Tolokonnikov's Corona Theorem is used to obtain two results on cyclicity in Besov-Dirichlet spaces. 
\end{abstract}

\maketitle
\vspace{-1em}
\hfill{\`A la m\'emoire de Mohamed Zarrabi}
 \section{Introduction}
 
 Let $X$ be a Banach space of analytic functions in the unit disc
$\D$ such that the shift operator $S : f(z) \to  z f(z)$ is a continuous map
of $X$ into itself. The cyclic vectors in $X$ are those functions $f$ such that
the polynomial multiples of $f$ are dense in $X$. Beurling in \cite{Beu} provided a complete characterization of cyclic vectors in the Hardy space; the cyclic vectors are precisely the outer functions. Cyclic vectors in the Dirichlet space were initially examined by Carleson in \cite{CA} and later by Brown and Shields in \cite{16}. In this paper, we focus on studying cyclic vectors in Besov-Dirichlet spaces. Specifically, motivated by the inquiries raised by Brown and Shields \cite[Question 3]{16} regarding cyclic vectors in a general Banach space $X$ of analytic functions :

\textit{Question:} If $f,g \in X$, if g is cyclic, and if $|f(z)| \geq |g(z)|$ for all $z\in \D$ then must $f$ be cyclic?

We extend some of Brown and Shields' results on cyclicity to Besov-Dirichlet spaces.  We now introduce some notations. For $p \geq 1$ and  $\alpha >-1$, the  Besov  space,  $\mathcal{D}^{p}_{\alpha} $ is the set of holomorphic functions on  $\mathbb{D }$ such that 
$$\mathcal{D}_{\alpha,p} (f)=\int_{\mathbb{D}}|f'(z)|^p { d A}_\alpha(z)<\infty, $$
where  $dA_\alpha(z)=(1+\alpha)(1-|z|^2)^\alpha dA(z) $
and  $dA(z)$  is the normalised Lebesgue measure on the disc. 
The Besov-Dirichlet space is equipped with the norm
 $$\|f\|_{\mathcal{D}^{p}_{\alpha}}^{p}=|f(0)|^p+\mathcal{D}_{\alpha,p} (f).$$

The Besov-Dirichlet space   $\mathcal{D}^{p}_{\alpha} $ is the set of holomorphic functions  $f$ on $\D$ whose derivative $f'$ is a function of the Bergman space $\mathcal{A}_{\alpha}^p =L^p(\D,dA_\alpha)\cap \textrm{Hol}(\D)$, where $\textrm{Hol}(\D)$ is the space of holomorphic functions on $\D$. Note that if $p = 2$ and $\alpha=1$, $\mathcal{D}_1^2$  is the Hardy space  $H^2$ and if $p = 2$ and  $\alpha= 0$, then  $\mathcal{D}_0^2$ is the classical Dirichlet space $\mathcal{D}$.\\

Denote by   $[f]_{\mathcal{D}^{p}_{\alpha}}$  the smallest $S$-invariant subspace containing  $f$,  the vector subspace generated by  $\{ z^nf,~ n \in \mathbb{N} \}$. We say that   $f\in \mathcal{D}^{p}_{\alpha}$ is cyclic in  $\mathcal{D}^{p}_{\alpha}$ if
$$[f]_{\mathcal{D}^{p}_{\alpha}}=\mathcal{D}^{p}_{\alpha}.$$
The function $f\in H^1$ is called outer function if it is of the form
$$f(z)=\exp\frac{1}{2\pi} \int_\TT \frac{\zeta+z}{\zeta-z}\log \varphi(\zeta){|d\zeta|}, \qquad |z|<1,$$
where $\varphi$ is nonnegative function in $L^1(\TT)$ such that $\log\varphi \in L^1(\TT)$. Note that $|f|=\varphi$ $a.e.$ on une unit circle $\TT=\partial\D$.

The problem of characterizing the cyclic vectors in the Dirichlet space $\mathcal{D}^2_0$ is much more difficultIn \cite{16},  Brown and Shields conjectured that a function $f$ in the Dirichlet space $\mathcal{D}$ is cyclic for the shift operator if and only if f is outer and its boundary zero set is of logarithmic capacity.  The characterization of cyclic vector of  $  \mathcal{D}_\alpha^p$  depends on the values of $p$ and  $\alpha$. More precisely  our investigation is limited to the case  $\alpha+1\leq p\leq \alpha +2$.  Indeed,  
   If $1<p < \alpha + 1$, then  $H^p $ is continuously embedded in $  \mathcal{D}_\alpha^p$ see \cite{34},  hence  every outer function $f\in H^p$ is cyclic for $  \mathcal{D}_\alpha^p =\mathcal{A}_{\alpha-p}^p $. On the other hand,  if  $p > \alpha + 2$, then $  \mathcal{D}_\alpha^p \subset\mathcal{A}(\mathbb{D})=\textrm{Hol}(\D)\cap \mathcal{C}(\overline{\D})$ becomes Banach algebra see \cite{34}, consequently the only cyclic outer function are the invertible functions,  and then any function that vanishes at least at one point is not cyclic in  $  \mathcal{D}_\alpha^p$. Let  $\mathcal{A}({\mathbb{D}})$ be the disc algebra. For $f\in\mathcal{A}({\mathbb{D}})$,  denote 
   $$\mathcal{Z}(f)=\{\zeta \in  \mathbb{T}: f(\zeta)=0\}$$ the zero set of $f$.  
   Recall that Brown and Shields conjectured that a function $f$ in the Dirichlet space $\mathcal{D}$ is cyclic for the shift operator if and only if f is outer and its boundary zero set is of logarithmic capacity.  Here, we will prove the following theorem.

\begin{theo} \label{239} Let $p>1$ such that  $\alpha+1\leq p\leq \alpha +2$.
Let $f \in \mathcal{D}^p_{\alpha} \cap\mathcal{A}({\mathbb{D}})$ be an outer function such that  $\mathcal{Z}(f)=\{1\}$, then  $f$ is cyclic in  $\mathcal{D}^p_{\alpha}.$
\end{theo}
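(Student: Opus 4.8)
The plan is to show that $1\in[f]_{\mathcal{D}^p_\alpha}$, which, since the polynomials are dense in $\mathcal{D}^p_\alpha$, is equivalent to cyclicity of $f$. I would first record two soft facts. (i) If $g\in\mathcal{D}^p_\alpha\cap H^\infty$ then $fg\in[f]_{\mathcal{D}^p_\alpha}$: from $(fg)'=f'g+fg'$ with $f,g\in H^\infty$ and $f',g'\in\mathcal{A}^p_\alpha$ one gets $fg\in\mathcal{D}^p_\alpha$, and approximating $g$ by its dilates $g(r\cdot)$ and then by Taylor sections, all with uniformly bounded sup-norm, gives polynomials $q_k$ with $q_kf\to gf$ in $\mathcal{D}^p_\alpha$ (the cross term $\|(g-q_k)f'\|_{\mathcal{A}^p_\alpha}$ tends to $0$ by dominated convergence in the finite measure $|f'|^p\,dA_\alpha$ on $\D$). (ii) As $p>1$, $\mathcal{D}^p_\alpha$ is reflexive and the evaluations $h\mapsto h(z)$, $z\in\D$, are bounded; hence a sequence $(h_N)\subset[f]_{\mathcal{D}^p_\alpha}$ that is bounded in $\mathcal{D}^p_\alpha$ and converges to $1$ uniformly on compact subsets of $\D$ converges weakly to $1$, so $1$ lies in the weak, hence norm, closure of $[f]_{\mathcal{D}^p_\alpha}$. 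It therefore suffices to build such a sequence of the form $h_N=fg_N$ with $g_N\in\mathcal{D}^p_\alpha\cap H^\infty$.

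To build it, I would take $\psi_N(z)=\bigl(\tfrac{1+z}{2}\bigr)^N$. Since $f$ is outer it has no zero in $\D$, and since $f\in\mathcal{A}(\D)$ with $\mathcal{Z}(f)=\{1\}$ the only boundary zero of $f$ is $z=1$, where $\psi_N(1)=1$; hence
\[
\delta_N:=\inf_{z\in\D}\bigl(|f(z)|+|\psi_N(z)|\bigr)>0,
\]
so $(f,\psi_N)$ is admissible Corona data. I would then invoke Tolokonnikov's Corona theorem in the algebra $\mathcal{D}^p_\alpha\cap H^\infty$, in its quantitative form, to obtain $g_N,h_N\in\mathcal{D}^p_\alpha\cap H^\infty$ with
\[
fg_N+\psi_Nh_N=1,\qquad \|g_N\|,\ \|h_N\|\le C(\delta_N),
\]
where $C(\delta)\lesssim\delta^{-2}\log(e/\delta)$. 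By (i), $fg_N=1-\psi_Nh_N\in[f]_{\mathcal{D}^p_\alpha}$.

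It remains to check that $(fg_N)$ is bounded in $\mathcal{D}^p_\alpha$ and that $fg_N\to1$ locally uniformly, and this is where outerness of $f$ is decisive. Since $f$ is outer, $\log|f(z)|$ is the Poisson integral of $\log|f|\in L^1(\TT)$, whence $(1-|z|)\log|f(z)|\to0$ as $z\to1$; this prevents $|f|$ from being too small near $1$ and forces $\delta_N$ to decay only sub-exponentially, $-\log\delta_N=o(N)$, so $C(\delta_N)=e^{o(N)}$. As $|\psi_N(z)|\le\bigl(\tfrac{|1+z|}{2}\bigr)^N$ decays geometrically for each fixed $z\in\D\setminus\{1\}$, we get $|\psi_N(z)h_N(z)|\le\bigl(\tfrac{|1+z|}{2}\bigr)^N C(\delta_N)\to0$ uniformly on compacta, i.e.\ $fg_N\to1$ locally uniformly. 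The hard part will be the bound $\sup_N\|fg_N\|_{\mathcal{D}^p_\alpha}<\infty$: a crude submultiplicative estimate fails because $C(\delta_N)\to\infty$, so one must use that, by the Corona identity and $|f|\ge c>0$ away from any fixed neighbourhood of $1$, the function $1-fg_N=\psi_Nh_N$ is, up to a negligible remainder, concentrated in the region of size $\approx 1/N$ about $z=1$ where $f$ is tiny; estimating the Besov--Dirichlet norm of such a ``bump at scale $1/N$'' (by differentiating the Corona identity and inserting the quantitative bounds on $g_N,h_N$ together with the sub-exponential control of $\delta_N$) should give $O(1)$ uniformly in $N$, and even $o(1)$ when $p<\alpha+2$. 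Once this estimate is established, $(fg_N)$ is the required bounded sequence in $[f]_{\mathcal{D}^p_\alpha}$ converging to $1$ on compacta, so $1\in[f]_{\mathcal{D}^p_\alpha}$ and $f$ is cyclic in $\mathcal{D}^p_\alpha$. The hypothesis $\alpha+1\le p\le\alpha+2$ enters precisely in this ``bump'' estimate (it must fail for $p>\alpha+2$, where a single boundary point already obstructs cyclicity).
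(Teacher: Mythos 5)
There is a genuine gap at the step you yourself flag as ``the hard part'': the uniform bound $\sup_N\|fg_N\|_{\mathcal{D}^p_\alpha}<\infty$ is asserted (``should give $O(1)$'') but not proved, and the heuristic offered for it does not survive scrutiny. The Corona theorem only gives the global bounds $\|h_N\|_{\mathcal{D}^p_\alpha\cap\mathcal{A}(\mathbb{D})}\le \delta_N^{-\bA}=e^{o(N)}$; it gives no localization of $h_N'$. In the decomposition $(\psi_Nh_N)'=\psi_N'h_N+\psi_Nh_N'$, the second term is controlled by $\int_{\{|\psi_N|\ge 1/2\}}|h_N'|^p\,dA_\alpha$ plus a small remainder, and nothing prevents the entire mass of $\|h_N'\|^p_{\mathcal{A}^p_\alpha}$ (which may be as large as $e^{o(N)p}$, unbounded) from concentrating precisely in the region of size $\approx 1/N$ near $z=1$ where $|\psi_N|\approx 1$. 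So the ``bump at scale $1/N$'' estimate is not a routine computation but the whole difficulty, and as written the argument does not close. (Your soft facts (i) and (ii) — multiplier approximation by dilates/Fej\'er means, and reflexivity plus bounded pointwise convergence implying weak convergence — are fine and are indeed standard tools in this area.)

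For comparison, the paper takes a route that avoids any uniform norm estimate on corona solutions. It fixes $\lambda\in\C$, applies the Corona theorem to the pair $(\lambda-z,f)$ with the lower bound $\delta_\lambda\ge c_\varepsilon\exp(-\varepsilon/|1-|\lambda||)$ coming from the outer-function estimate $(1-|z|)\log(1/|f(z)|)\to 0$, and reads the resulting bound $\|G_\lambda\|\le\delta_\lambda^{-\bA}$ as a resolvent estimate for multiplication by $z$ on the quotient algebra $\bigl(\mathcal{D}^p_\alpha\cap\mathcal{A}(\mathbb{D})\bigr)/[f]$. Combining the sub-exponential growth inside $\D$ with a polynomial bound outside $\overline{\D}$ and the fact that the spectrum is $\{1\}$, Atzmon's theorem yields $(1-z)^{[p]+1}\in[f]$, and cyclicity of $(1-z)^{[p]+1}$ finishes the proof. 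The blow-up of the corona constants as $\lambda\to 1$ is thus absorbed by a Phragm\'en--Lindel\"of/quasinilpotence argument rather than by a delicate uniform bound; if you want to salvage your direct approach, you would need to supply the missing localization of $h_N$, which the quantitative Corona theorem alone does not provide.
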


The case of the classical Dirichlet space  $\mathcal{D}_0^2$ was discovered by Hedenmalm-Shields \cite{28} and generalized by Richter-Sundberg \cite{RS}. This result was shown \cite{34}  for $\alpha+1<p\leq \alpha +2$, the method used for the proof  is inspired by that of  Hedenmalm and Shields \cite{28}. Note that our result also includes the case where  $p=\alpha+1$.  Thanks to  \cite[Theorem 3]{28}, Theorem \ref{239}  remains true if $\mathcal{Z}(f)=\{1\}$ to a point is replaced by 
  $\mathcal{Z}(f)$ is countable.   Our second main result is

\begin{theo}\label{bbss1}
Let  $p>1$ such that  $1+\alpha\leq p\leq \alpha +2$.  Let  $f,g \in  \mathcal{D}^{p}_{\alpha}   \cap \mathcal{A}({\mathbb{D}})$ such that 
\begin{equation}\label{majorationfg}
|g(z)|\leq |f(z)| ,\qquad z\in \mathbb{D}.
\end{equation}
If $g$ is cyclic in $\mathcal{D}_{\alpha}^{p}  $ then $f$ is cyclic in  $\mathcal{D}_{\alpha}^{p} .$
\end{theo}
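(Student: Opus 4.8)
The plan is to prove that $g\in[f]_{\mathcal{D}^p_\alpha}$, which suffices: since $[f]_{\mathcal{D}^p_\alpha}$ is a closed $S$-invariant subspace of $\mathcal{D}^p_\alpha$ containing $g$, it then contains $[g]_{\mathcal{D}^p_\alpha}=\mathcal{D}^p_\alpha$, so $f$ is cyclic. I would first record the structural consequences of \eqref{majorationfg}. Since $f,g\in\mathcal{A}(\mathbb{D})$, inequality \eqref{majorationfg} persists on $\overline{\mathbb{D}}$, whence $\mathcal{Z}(f)\subseteq\mathcal{Z}(g)$; moreover $g$ is outer (cyclic vectors of $\mathcal{D}^p_\alpha$ are outer), so $g$ has no zero in $\mathbb{D}$, hence neither does $f$, and $h:=g/f$ extends holomorphically to $\mathbb{D}$ with $h\in H^\infty$, $\|h\|_\infty\le1$ (the inner–outer factorisation of $g=hf$ in fact forces $f$ and $h$ to be outer as well). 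Finally, from $g'=h'f+hf'$ and $|h|\le1$ one gets $h'f=g'-hf'\in L^p(\mathbb{D},dA_\alpha)$, i.e.
\[
\int_{\mathbb{D}}|h'(z)|^p|f(z)|^p\,dA_\alpha(z)<\infty ,
\]
which will be the fuel of the limiting argument.

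Next I would introduce approximants of $h$: for $r\in(0,1)$ let $h_r$ be holomorphic in a neighbourhood of $\overline{\mathbb{D}}$ with $\|h_r\|_\infty\le1$, $\inf_{\overline{\mathbb{D}}}|h_r|>0$, and $h_r\to h$ pointwise on $\mathbb{D}$ as $r\to1^-$; the dilates $h_r(z)=h(rz)$ are the natural first candidate (they are zero-free on $\overline{\mathbb{D}}$ because $h$ is), possibly to be replaced by a corona-theoretic refinement near $\mathcal{Z}(f)$, see below. Because $h_r$ is holomorphic past $\overline{\mathbb{D}}$, its Taylor polynomials converge to it together with all derivatives uniformly on $\overline{\mathbb{D}}$, and since $f\in\mathcal{A}(\mathbb{D})\subset L^p(dA_\alpha)$ this yields polynomials $p_n$ with $p_nf\to h_rf$ in $\mathcal{D}^p_\alpha$; hence $h_rf\in[f]_{\mathcal{D}^p_\alpha}$ for every $r<1$. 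The decisive point is the uniform bound
\[
\sup_{0<r<1}\|h_rf\|_{\mathcal{D}^p_\alpha}<\infty .
\]
Granting it, the proof concludes by a weak-compactness argument: for $p>1$ the space $\mathcal{D}^p_\alpha$ is reflexive (via the isometry $u\mapsto(u(0),u')$ onto a closed subspace of $\mathbb{C}\oplus L^p(dA_\alpha)$), so along a sequence $r_n\uparrow1$ the bounded family $(h_{r_n}f)$ has a weakly convergent subsequence; its weak limit is $g$ because $h_{r_n}f\to hf=g$ pointwise and point evaluations are weakly continuous on $\mathcal{D}^p_\alpha$; and $[f]_{\mathcal{D}^p_\alpha}$, being a closed subspace, is weakly closed, so $g\in[f]_{\mathcal{D}^p_\alpha}$.

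The main obstacle is exactly the uniform estimate above. Writing $\|h_rf\|_{\mathcal{D}^p_\alpha}^p\asymp|h_r(0)f(0)|^p+\int_{\mathbb{D}}|h_r'|^p|f|^p\,dA_\alpha+\int_{\mathbb{D}}|h_r|^p|f'|^p\,dA_\alpha$, the third term is $\le\|f\|_{\mathcal{D}^p_\alpha}^p$, and one must show that the middle term stays bounded by (a constant times) $\int_{\mathbb{D}}|h'|^p|f|^p\,dA_\alpha$. After the change of variables $w=rz$ this reduces to comparing $|f(w/r)|^p\,(1-|w/r|^2)^\alpha$ with $|f(w)|^p\,(1-|w|^2)^\alpha$ on $\{|w|<r\}$: the weight comparison is favourable for $\alpha\ge0$ but reverses when $-1<\alpha<0$, and the quotient $|f(w/r)|/|f(w)|$ is not controlled without using the location of the boundary zeros of the outer function $f\in\mathcal{A}(\mathbb{D})$. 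This is where Tolokonnikov's \emph{quantitative} corona theorem enters: applied to a corona datum adapted to $\mathcal{Z}(f)$ (a crude pair such as $(f,\varepsilon)$ is useless, its solution having size $\sim\varepsilon^{-1}$), it produces controlled local factorisations from which $\int_{\mathbb{D}}|h_r'|^p|f|^p\,dA_\alpha$ can be dominated uniformly in $r$. The endpoint $p=\alpha+1$ (a limiting case, equal to $H^2$ when $p=2$) will need separate treatment, since several of the Bergman-type manipulations degenerate there.
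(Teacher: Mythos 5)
Your overall strategy (prove $g\in[f]_{\mathcal{D}^p_\alpha}$ via $h=g/f\in H^\infty$, dilations $h_r$, a uniform norm bound, and weak compactness) is not the paper's route, and it contains a genuine gap that you yourself flag but do not close: the uniform estimate $\sup_{0<r<1}\|h_rf\|_{\mathcal{D}^p_\alpha}<\infty$. This is not a technical nuisance to be handled ``by a corona-theoretic refinement'' --- it is the entire difficulty of the theorem. Since $H^\infty$ functions are not multipliers of $\mathcal{D}^p_\alpha$, the bound $\|h\|_\infty\le1$ gives no control of $\int_{\mathbb{D}}|h_r'|^p|f|^p\,dA_\alpha$, and your own change-of-variables analysis shows the comparison of $|f(w/r)|$ with $|f(w)|$ breaks down. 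The sentence invoking Tolokonnikov's theorem ``applied to a corona datum adapted to $\mathcal{Z}(f)$'' names no datum, no lower bound $\delta$, and no resulting estimate, so nothing is actually proved. Note also that you are aiming at a conclusion strictly stronger than what the paper establishes: the paper only obtains $g^N\in[f]_{\mathcal{D}^p_\alpha}$ for some $N=N(\alpha,p)$ depending on the corona constant, and it needs the much stronger hypothesis \eqref{eqd1} (logarithmic decay of $|g|$ where $|f|$ is small, plus ${\rm Re}\,g\ge0$) to upgrade this to $g\in[f]_{\mathcal{D}^p_\alpha}$ in Theorem \ref{rafinement}. That the direct containment $g\in[f]$ is apparently out of reach under \eqref{majorationfg} alone is a strong signal that your dilation scheme, which targets exactly that containment, cannot be completed as described.

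For contrast, the paper's argument runs as follows: for $\lambda\in\mathbb{C}\setminus\{0\}$ the pair $(1-\lambda g,\,f)$ satisfies the corona condition with $\delta_\lambda\ge 1/(2|\lambda|)$, because $|g(z)|\le 1/(2|\lambda|)$ forces $|1-\lambda g(z)|\ge 1/2$ while $|g(z)|\ge 1/(2|\lambda|)$ forces $|f(z)|\ge 1/(2|\lambda|)$ by \eqref{majorationfg}. Tolokonnikov's theorem then makes $\pi(1-\lambda g)$ invertible in the quotient algebra $\bigl(\mathcal{D}^p_\alpha\cap\mathcal{A}(\mathbb{D})\bigr)/[f]$ with $\|\pi(1-\lambda g)^{-1}\|\le (2|\lambda|)^{\bA}$; Liouville's theorem forces $\lambda\mapsto\pi(1-\lambda g)^{-1}$ to be a polynomial of degree at most $[\bA]$, and expanding the Neumann series for small $|\lambda|$ yields $\pi(g)^{[\bA]+1}=0$, i.e.\ $g^{N}\in[f]$ with $N=[\bA]+1$. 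Combined with the lemma on cyclicity of products (which transfers cyclicity between $g$ and $g^N$ via Fej\'er means, using $g\in\mathcal{A}(\mathbb{D})$), this gives the theorem. If you want to salvage your write-up, the concrete corona datum you are missing is precisely $(1-\lambda g, f)$, and the quantitative dependence $\delta_\lambda^{-\bA}$ is what replaces your unproven uniform bound.
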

This result generalizes that of Brown and Shields \cite[Theorem 1]{16}, then Aleman \cite[Corollary 3.3]{Al} for $\mathcal{D}^2_\alpha$ spaces. \\

The proof of the two theorems is based on the Tolokonnikov Corona Theorem \cite{T}.  The idea of using Corona's theorem in this context goes back to Roberts for the Bergman space \cite{Ro}, see also \cite{AT,BEK,BEK0, EKS}. For some results related to cyclic vectors, see \cite{6,7,23,24,25,EKR2,RS2,RS} and the references therein.

\section{Proof of Theorem \ref{239} and Theorem \ref{bbss}}

We recall two results we will need for the proofs. 
The first is the  Corona Theorem of Tolokonnikov \cite{T}
\begin{theo}\label{cou} Let $1<p\leq \alpha+2$ and 
Let  $f_1 , f_2 \in \mathcal{D}_{\alpha}^{p} \cap\mathcal{A}({\mathbb{D}})$ such that  
 $$\sup_{z\in \mathbb{D}} \Big(|f_1(z)|+|f_2(z)|\Big)> \delta>0.$$
Then there exists $h_1 , h_2 \in \mathcal{D}_{\alpha}^{p}  \cap\mathcal{A}({\mathbb{D}})$ such that 
 $$\left\{\begin{array}{lll}
 f_1(z)h_1(z)+ f_2(z)h_2(z)=1, \qquad z\in \mathbb{D}\\
\\
 \| h_1\|_{\mathcal{D}_{\alpha}^{p}  \cap\mathcal{A}({\mathbb{D}})} \leq \delta^{-\bA} \quad \text{ and }\quad   \| h_2\|_{  \mathcal{D}_{\alpha}^{p} \cap\mathcal{A}({\mathbb{D}})} \leq \delta^{-\bA}
 \end{array}
 \right.
 $$
for some positive constant $\bA\geq4$  independent of  $p$ and  $\alpha$. 
 \end{theo}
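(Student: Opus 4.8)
The plan is to prove this two-generator corona theorem by the $\bar\partial$-method, which for two functions collapses to a single scalar $\bar\partial$-equation. Reading the hypothesis as the genuine corona condition, i.e.\ the uniform lower bound $|f_1(z)|+|f_2(z)|>\delta$ on $\mathbb{D}$, and normalising so that $\|f_1\|,\|f_2\|$ are controlled, I first set $\varphi=|f_1|^2+|f_2|^2$, so that $\varphi\gtrsim\delta^2$ throughout $\mathbb{D}$, and introduce the smooth non-holomorphic partition $\psi_j=\bar f_j/\varphi$, for which $f_1\psi_1+f_2\psi_2=1$. The Koszul correction seeks holomorphic solutions of the form
$$h_1=\psi_1-f_2\,g,\qquad h_2=\psi_2+f_1\,g,$$
which automatically obey the B\'ezout identity $f_1h_1+f_2h_2=1$ and are holomorphic precisely when $g$ solves the scalar equation $\bar\partial g=G$ with source $G=\psi_2\,\bar\partial\psi_1-\psi_1\,\bar\partial\psi_2$. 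Thus everything reduces to producing one solution $g$ of $\bar\partial g=G$ with simultaneous control in the weighted Bergman scale and in the sup-norm.

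The second step bounds the source. Since $f_1,f_2$ are holomorphic one has $\partial_z\bar f_j=0$, and differentiating $\psi_j=\bar f_j/\varphi$ expresses $\bar\partial\psi_j$ through $f_1,f_2,f_1',f_2'$ and negative powers of $\varphi$; collecting terms gives a pointwise estimate
$$|G(z)|\lesssim \delta^{-c}\bigl(|f_1'(z)|+|f_2'(z)|\bigr),\qquad z\in\mathbb{D}.$$
Because $f_1,f_2\in\mathcal{D}^p_\alpha$ means exactly $f_1',f_2'\in\mathcal{A}^p_\alpha=L^p(\mathbb{D},dA_\alpha)\cap\mathrm{Hol}(\mathbb{D})$, the source $G$ lies in $L^p(\mathbb{D},dA_\alpha)$ with $\|G\|\lesssim\delta^{-c}\bigl(\|f_1\|_{\mathcal{D}^p_\alpha}+\|f_2\|_{\mathcal{D}^p_\alpha}\bigr)$, while the continuity of the $f_j$ up to $\mathbb{T}$ equips $G$ with the boundary decay needed later.

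The heart of the proof is solving $\bar\partial g=G$ so that $h_1,h_2\in\mathcal{D}^p_\alpha\cap\mathcal{A}(\mathbb{D})$. Writing $h_1'=\partial_z\psi_1-f_2'\,g-f_2\,\partial_z g$ (and symmetrically for $h_2'$), membership in $\mathcal{D}^p_\alpha$ needs $\partial_z g$ and $g$ in $L^p(\mathbb{D},dA_\alpha)$, while membership in $\mathcal{A}(\mathbb{D})$ needs $g\in C(\overline{\mathbb{D}})$. I would take the Cauchy-transform solution $g(z)=\tfrac1\pi\int_{\mathbb{D}}G(w)/(z-w)\,dA(w)$: then $\partial_z g$ is, up to sign, the Beurling transform of the zero-extension of $G$, so that weighted $L^p$ boundedness of the Beurling operator yields $\|\partial_z g\|_{L^p(dA_\alpha)}\lesssim\delta^{-\bA}$, this being exactly where the admissible strip $1<p\leq\alpha+2$ enters, as it places the weight exponent in the range for which these singular integrals are bounded. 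The weaker bounds $\|g\|_{L^p(dA_\alpha)}+\|g\|_\infty\lesssim\delta^{-\bA}$ follow from the same representation, the kernel $1/(z-w)$ being only weakly singular and the boundary decay of $G$ ensuring continuity of $g$ on $\overline{\mathbb{D}}$. Tracking the power of $\delta$ through the partition, the source bound and the solution operator yields a single exponent $\bA$, which is $\geq 4$ and independent of $p,\alpha$ because the operator norms are uniform across the strip.

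The main obstacle I anticipate is precisely this last step: obtaining the weighted Bergman estimate for the $\bar\partial$-solution uniformly over the whole range, including the endpoint $p=\alpha+2$, while simultaneously forcing the solution into the disc algebra. The canonical bounded solution of $\bar\partial$ and the canonical $\mathcal{D}^p_\alpha$-solution are a priori different, so reconciling both demands --- with a $\delta$-exponent that does not degenerate as $p\to\alpha+2$ --- is where Tolokonnikov's argument concentrates its technical work.
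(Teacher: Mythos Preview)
The paper does not prove this statement at all: Theorem~\ref{cou} is quoted from Tolokonnikov's paper \cite{T} as a tool, together with a remark on the history and on the value of the exponent $\bA$ in special cases. There is therefore nothing to compare your argument against in the present paper.

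As for the substance of your sketch, the Koszul/$\bar\partial$ framework you describe is indeed the standard route to corona theorems in function algebras, and your identification of the source $G=\psi_2\,\bar\partial\psi_1-\psi_1\,\bar\partial\psi_2$ and of the pointwise bound $|G|\lesssim\delta^{-c}(|f_1'|+|f_2'|)$ is correct. But your proposal is a plan, not a proof: you yourself flag the decisive step --- producing a solution of $\bar\partial g=G$ that lies simultaneously in the weighted Bergman scale and in $C(\overline{\mathbb{D}})$, with constants uniform across the full strip $1<p\le\alpha+2$ including the endpoint --- and you do not carry it out. Appealing to ``weighted $L^p$ boundedness of the Beurling operator'' and ``boundary decay of $G$'' is exactly where the real work sits; in particular, the weight $(1-|z|^2)^\alpha$ is not in the Muckenhoupt class $A_p$ for all parameters in the strip, so the Beurling-transform step needs a genuine argument, and forcing the same $g$ into $\mathcal{A}(\mathbb{D})$ while keeping a $\delta$-power bound that does not blow up at $p=\alpha+2$ is precisely the technical content of Tolokonnikov's paper. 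So: right architecture, but the load-bearing estimate is asserted rather than proved.
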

 
 \begin{rema}
   If  $p>\alpha+2$,  then $\mathcal{D}^{p}_{\alpha}\subset \mathcal{A}(\mathbb{D})$.  If $p=2$ and $\alpha=0$, $\mathcal{D}^2_0=H^2$ and we therefore find the classical Carleson-Corona  Theorem \cite{Car} . In this case, the constant $\bA>2$ instead of $\bA\geq 4.$ If $\alpha=p-2$, Tolokonnikov \cite{T} showed that $\bA=4$. Nicolau in \cite{Nico} showed the Corona Theorem   but without giving the quantitative version, see also \cite{ARSW, CSW}.  
    \end{rema}

Let $T$ be a bounded linear operator actings on an infinite dimensional complex Banach space $X$. The spectrum of T is denoted by $\sigma(T)$. The following corollary is easily obtained by  Atzmon's Theorem  \cite{At} and Cauchy's inequalities.

\begin{coro}\label{atz} 

Let $T$ be an invertible operator on  Banach $X$ such that  $\sigma(T)=\{1\}$. Suppose that there exists $k\geq 0$  and  $c>0$  such that for $\varepsilon >0$, there exists  $ c_\varepsilon >0$
$$
\left\{ \begin{array}{ll}
 \|(T-zI)^{-1}\| \leq \displaystyle c_\varepsilon \exp\frac{\varepsilon}{1-|z|} &|z|<1,\\
 &\\
 \|(T-zI)^{-1}\| \leq\displaystyle \frac{c}{(|z|-1)^{k}} & |z|>1, 
 \end{array}
\right.
$$
then   $(I-T)^k=0$.
\end{coro}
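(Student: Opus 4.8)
The plan is to deduce the statement from Atzmon's theorem, which characterizes when an operator $T$ with $\sigma(T)=\{1\}$ satisfies $(I-T)^k=0$ in terms of the growth of the resolvent near the spectrum together with a one-sided growth control on the Fourier-type coefficients of the resolvent. Concretely, Atzmon's theorem says: if $\sigma(T)=\{1\}$, if $\|(T-zI)^{-1}\|\leq c/(|z|-1)^k$ for $|z|>1$, and if for the analytic function $z\mapsto (T-zI)^{-1}$ on $\D$ one has the sub-exponential bound $\limsup_{n\to\infty}\frac1n\log\|a_n\|\leq 0$ where $a_n$ are the operator coefficients in the expansion $(T-zI)^{-1}=-\sum_{n\geq0}a_n z^n$ (equivalently $a_n=-T^{-n-1}$ on a suitable functional-analytic reading, or more precisely the coefficients extracted by Cauchy integrals), then $(I-T)^k=0$. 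So the first step is simply to recall Atzmon's precise hypotheses and observe that the $|z|>1$ bound we are given is exactly one of them.

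The second step is to convert the hypothesis $\|(T-zI)^{-1}\|\leq c_\varepsilon\exp\bigl(\varepsilon/(1-|z|)\bigr)$ on $\D$ into the required coefficient bound via Cauchy's inequalities, which is why the corollary is phrased as following ``easily'' from Atzmon plus Cauchy. Writing $a_n=\frac{1}{2\pi i}\int_{|z|=r}(T-zI)^{-1}z^{-n-1}\,dz$ for any $0<r<1$, we get $\|a_n\|\leq r^{-n}\,c_\varepsilon\exp\bigl(\varepsilon/(1-r)\bigr)$. Optimizing $r$ as a function of $n$ (take $1-r\sim\sqrt{\varepsilon/n}$, so that $r^{-n}\approx e^{n(1-r)}\approx e^{\sqrt{\varepsilon n}}$ and the exponential factor is $e^{\sqrt{\varepsilon n}}$) yields $\|a_n\|\leq c_\varepsilon\exp\bigl(C\sqrt{\varepsilon n}\bigr)$ for a universal constant $C$. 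Hence $\frac1n\log\|a_n\|\leq \frac1n\log c_\varepsilon+C\sqrt{\varepsilon/n}\to 0$ as $n\to\infty$, for every fixed $\varepsilon>0$; in particular $\limsup_n\frac1n\log\|a_n\|\leq 0$. This is precisely the sub-exponential growth condition Atzmon requires, and combined with the polynomial resolvent bound outside the disc and $\sigma(T)=\{1\}$, Atzmon's theorem gives $(I-T)^k=0$.

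The only mildly delicate point — and the one I would expect to be the main obstacle in writing this carefully — is matching the \emph{exact} formulation of Atzmon's theorem: different statements in the literature phrase the spectral-growth hypotheses in terms of $\log\|T^n\|$, or in terms of the Taylor coefficients of the resolvent on both $\D$ and $\C\setminus\overline{\D}$, with specific rates (the classical Atzmon condition being $\sum_{n}\frac{\log^+\|T^{-n}\|}{1+n^2}<\infty$ together with a polynomial bound on the positive side, or vice versa). One must check that the $\exp(\varepsilon/(1-|z|))$ bound, after the Cauchy-estimate optimization above, really delivers whichever summability/limsup condition the cited version of Atzmon's theorem uses — the bound $\|a_n\|\lesssim_\varepsilon e^{C\sqrt{\varepsilon n}}$ is comfortably strong enough for all standard versions (it gives $\sum \log^+\|a_n\|/n^2<\infty$ and more), so no real difficulty arises, but the bookkeeping of constants and the translation between ``resolvent on $\D$'' and ``coefficients $a_n$'' should be done explicitly. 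Everything else is routine: invertibility of $T$ and $\sigma(T)=\{1\}$ are assumed, the $|z|>1$ estimate is assumed verbatim, and the conclusion $(I-T)^k=0$ is read off directly from Atzmon's theorem.
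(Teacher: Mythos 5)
Your approach coincides with the paper's, which offers no proof of this corollary beyond the remark that it follows from Atzmon's theorem and Cauchy's inequalities: your Cauchy estimate on the coefficients $T^{-n-1}$ of the resolvent in $\D$, optimized at $1-r\sim\sqrt{\varepsilon/n}$, correctly yields $\log\|T^{-n}\|=o(\sqrt{n})$, which together with the polynomial resolvent bound for $|z|>1$ and $\sigma(T)=\{1\}$ is exactly the input Atzmon's theorem needs. The only blemish is that the condition you first attribute to Atzmon, $\limsup_n\frac1n\log\|a_n\|\le 0$, is automatically satisfied by any invertible $T$ with $\sigma(T)=\{1\}$ (it just says the spectral radius of $T^{-1}$ is $1$) and so cannot be the actual hypothesis; the genuine requirement is the $o(\sqrt{n})$ (quasianalyticity-type) growth bound, which your optimized estimate does deliver and which you rightly flag in your closing paragraph as the point to be matched against the precise statement in Atzmon's paper.
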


\subsection{Proof of Theorem \ref{239}}

 
Let $\lambda \in \C$ and put
 $$\delta_\lambda:=\inf_{z\in \D}|\lambda-z|+|f(z)|.$$
Since $f$ is an outer function, by \cite{SS}
 $$\lim_{|z|\to 1-} (1-|z|)\log 1/|f(z)|=0.$$
For all $\varepsilon >0$, there is therefore  $ c_\varepsilon >0$  such that 
   \begin{equation}\label{fouter}
   |f(z)|\geq c_\varepsilon \exp\frac{-\varepsilon}{1-|z|},\qquad z\in \mathbb{D}.
   \end{equation}
Considering  $|\lambda|\neq 1$,  we distinguish two cases :\\ 

\begin{itemize}
    \item If $|z-\lambda|\geq  |1-|\lambda||/2$, then $\delta_\lambda\geq |1-|\lambda||/2$.\\

\item If $|z-\lambda|\leq  |1-|\lambda||/2$,  then 
$$
|1-|\lambda||/2\geq |z-\lambda|\geq |(1-|\lambda|)-(1-|z|)|
 \geq |1-|\lambda||-|1-|z||.$$
Thus, we get $1-|z|\geq  |1-|\lambda||/2$ and by \eqref{fouter}. We then have 
$$|f(z)| \geq c_\varepsilon \exp\frac{-\varepsilon}{|1-|\lambda||}.$$
\end{itemize}

Therefore, we finally get $$\delta_\lambda \geq c_\varepsilon \exp\frac{-\varepsilon}{|1-|\lambda||}.$$
According to the Theorem  \ref{cou}, There is $g,h\in \mathcal{D}_\alpha^p \cap\mathcal{A}({\mathbb{D}})$ such that 
$$\left\{
\begin{array}{lll}
(\lambda-z)g+fh=1\\
\\
\|g\|_{\mathcal{D}_\alpha^p \cap\mathcal{A}({\mathbb{D}})} \leq \delta_\lambda^{-\bA} \text{ and } \|h\|_{\mathcal{D}_\alpha^p \cap\mathcal{A}({\mathbb{D}})} \leq \delta_\lambda^{-\bA}
\end{array}
\right.
$$
for some constant $A\geq 4$.

Let $[f]_ {\mathcal{D}_\alpha^p \cap\mathcal{A}({\mathbb{D}})}$ be the ideal generated by  $f$ and let 
$$\pi : \mathcal{D}_\alpha^p \cap\mathcal{A}({\mathbb{D}}) \to  \mathcal{D}_\alpha^p \cap\mathcal{A}({\mathbb{D}}) /[f]_ {\mathcal{D}_\alpha^p \cap\mathcal{A}({\mathbb{D}})}$$ be the canonical surjection. We have 
$$(\lambda\pi(1)-\pi(z))^{-1}=\pi(g).$$

For $|\lambda|<1$, we have

\begin{eqnarray*} \|(\lambda\pi(1)-\pi(z))^{-1}\| &=&\|\pi(g)\|\\
&\leq &\|g\|_{\mathcal{D}_\alpha^p \cap\mathcal{A}({\mathbb{D}})} \\
&\leq& c_\varepsilon \exp\frac{\varepsilon}{1-|\lambda|}.
\end{eqnarray*}

For $|\lambda|>1$, we have 
\begin{eqnarray*}  \|(\lambda\pi(1)-\pi(z))^{-1}\|&\leq & \|(\lambda-z)^{-1}\|_{\mathcal{D}_\alpha^p \cap\mathcal{A}({\mathbb{D}})}\\
&=&\frac{1}{|\lambda|-1}+\frac{1}{|\lambda|}+\Big(\int_\D\frac{dA_\alpha(z)}{|\lambda-z|^{2p}}\Big)^{1/p}\\
&\leq&\frac{2}{|\lambda|-1}+ \frac{1}{(|\lambda|-1)^{p}}.
\end{eqnarray*}
The spectrum of $\pi$,  $\sigma(\pi)=\{1\}$, by Corollary \ref{atz},  $(\pi(1) -\pi(\alpha))^{[p]+1}=0$, and we get  $(1-z)^{[p]+1}\in [f]_ {\mathcal{D}_\alpha^p \cap\mathcal{A}({\mathbb{D}})}$. Since  $(1-z)^{[p]+1}$ is cyclic in $\mathcal{D}_\alpha^p$,   $f$ is also cyclic in  $\mathcal{D}_\alpha^p$ and  the proof is  complete.

\subsection{Proof of Theorem \ref{bbss1} } 

The proof of the Theorem \ref{bbss1} is deduced from the following two results.
\begin{lemma}
Let  $p>1$ such that  $1+\alpha\leq p\leq \alpha +2$.  Let  $f,g \in  \mathcal{D}^{p}_{\alpha}   \cap \mathcal{A}({\mathbb{D}})$, if $fg$ is cyclic, then both $f$ and $g$ are cyclic.
\end{lemma}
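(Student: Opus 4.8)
The plan is to prove the Lemma by a duality-free, algebraic argument using only the multiplicative structure of $\mathcal{D}^p_\alpha \cap \mathcal{A}(\mathbb{D})$ as a Banach algebra together with the cyclicity hypothesis on the product. Suppose $fg$ is cyclic in $\mathcal{D}^p_\alpha$. First I would observe that since $f, g \in \mathcal{D}^p_\alpha \cap \mathcal{A}(\mathbb{D})$ and, under the hypothesis $1+\alpha \le p \le \alpha+2$, this space is closed under multiplication (the relevant pointwise/product estimates are standard in this range; in the boundary case $p = \alpha+2$ it is the disc-algebra-type estimate, and for $p < \alpha+2$ it follows from the fact that $\mathcal{D}^p_\alpha$ is a module over $\mathcal{D}^p_\alpha \cap \mathcal{A}(\mathbb{D})$), the polynomial multiples of $fg$ all lie in $g \cdot [f]_{\mathcal{D}^p_\alpha}$ and symmetrically in $f \cdot [g]_{\mathcal{D}^p_\alpha}$.

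The key step is the following: I claim $[fg]_{\mathcal{D}^p_\alpha} \subseteq [f]_{\mathcal{D}^p_\alpha}$. Indeed, $z^n fg = (z^n g) \cdot f \in [f]_{\mathcal{D}^p_\alpha}$ for every $n$, because $[f]_{\mathcal{D}^p_\alpha}$ is a closed $S$-invariant subspace and hence a module over the algebra of multipliers; more directly, $[f]_{\mathcal{D}^p_\alpha}$ contains $p(z) f$ for all polynomials $p$, and multiplication by the fixed function $g \in \mathcal{D}^p_\alpha \cap \mathcal{A}(\mathbb{D})$ is a bounded operator on $\mathcal{D}^p_\alpha$ commuting with $S$, so it maps $[f]_{\mathcal{D}^p_\alpha}$ into itself; applied to $f \in [f]_{\mathcal{D}^p_\alpha}$ this gives $gf \in [f]_{\mathcal{D}^p_\alpha}$, and then $z^n gf \in [f]_{\mathcal{D}^p_\alpha}$. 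Taking closed linear spans, $[fg]_{\mathcal{D}^p_\alpha} \subseteq [f]_{\mathcal{D}^p_\alpha}$. Since $fg$ is cyclic, the left side is all of $\mathcal{D}^p_\alpha$, hence $[f]_{\mathcal{D}^p_\alpha} = \mathcal{D}^p_\alpha$, i.e. $f$ is cyclic. The argument for $g$ is identical, swapping the roles of $f$ and $g$.

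I expect the main obstacle to be justifying rigorously that multiplication by $g \in \mathcal{D}^p_\alpha \cap \mathcal{A}(\mathbb{D})$ is a bounded operator on $\mathcal{D}^p_\alpha$ in the full range $1+\alpha \le p \le \alpha+2$, i.e. that $\mathcal{D}^p_\alpha \cap \mathcal{A}(\mathbb{D})$ multiplies $\mathcal{D}^p_\alpha$ into itself continuously. For $f \in \mathcal{D}^p_\alpha$ and $g \in \mathcal{D}^p_\alpha \cap \mathcal{A}(\mathbb{D})$ one estimates $(fg)' = f'g + fg'$; the term $f'g$ is controlled by $\|g\|_\infty \|f\|_{\mathcal{D}^p_\alpha}$, while the term $fg'$ requires a pointwise bound on $f$ in terms of its $\mathcal{D}^p_\alpha$-norm, namely $|f(z)| \lesssim (1-|z|^2)^{-(\alpha+2-p)/p}\|f\|_{\mathcal{D}^p_\alpha}$ (with a logarithmic factor when $p = \alpha+2$), combined with the membership $g' \in \mathcal{A}^p_\alpha$ and a standard integral estimate; one checks the exponents work out precisely when $p \le \alpha+2$. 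This is essentially the content of the embedding and algebra results of \cite{34} already cited in the introduction, so I would invoke those and keep the verification brief. Once this boundedness is in hand, the rest is the soft module argument above.
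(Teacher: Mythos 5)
Your overall strategy is the right one --- showing $[fg]_{\mathcal{D}^p_\alpha}\subseteq[f]_{\mathcal{D}^p_\alpha}$ (and symmetrically $[fg]_{\mathcal{D}^p_\alpha}\subseteq[g]_{\mathcal{D}^p_\alpha}$) is exactly what the paper does --- but the step you lean on to get it, namely that multiplication by a fixed $g\in\mathcal{D}^p_\alpha\cap\mathcal{A}(\mathbb{D})$ is a bounded operator on \emph{all of} $\mathcal{D}^p_\alpha$, is false in the range of the lemma. Already for $p=2$, $\alpha=0$ (the classical Dirichlet space, which satisfies $1+\alpha\le p\le\alpha+2$), Stegenga's theorem says a multiplier of $\mathcal{D}$ must be bounded \emph{and} have $|g'|^2\,dA$ a capacitary Carleson measure, and there are functions in $\mathcal{D}\cap\mathcal{A}(\mathbb{D})$ that fail the latter; so the multiplier algebra is strictly smaller than $\mathcal{D}\cap\mathcal{A}(\mathbb{D})$. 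Your own exponent check actually exposes the problem rather than resolving it: inserting the pointwise bound $|f(z)|\lesssim(1-|z|^2)^{-(\alpha+2-p)/p}\|f\|_{\mathcal{D}^p_\alpha}$ into the term $fg'$ leads to $\int_{\mathbb{D}}|g'|^p(1-|z|^2)^{p-2}\,dA$, and since $p-2\le\alpha$ in the allowed range, the weight $(1-|z|^2)^{p-2}$ is \emph{larger} than $(1-|z|^2)^{\alpha}$ near the boundary, so finiteness of $\mathcal{D}_{\alpha,p}(g)$ does not control this integral. The exponents close when $p\ge\alpha+2$, i.e.\ on the wrong side of the hypothesis.

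What saves the argument --- and is what the paper does --- is that you never need $g$ to multiply the whole space: both factors lie in $\mathcal{D}^p_\alpha\cap\mathcal{A}(\mathbb{D})$, so every sup norm in sight is finite. The paper approximates $f$ by its Fej\'er means $\sigma_n(f)$, which are polynomials converging to $f$ uniformly (because $f\in\mathcal{A}(\mathbb{D})$) and in $\mathcal{D}^p_\alpha$, and then estimates via the product rule
$$\big\|\big((\sigma_n(f)-f)g\big)'\big\|_{\mathcal{A}^p_\alpha}\lesssim \|\sigma_n(f)-f\|_{\mathcal{D}^p_\alpha}\,\|g\|_\infty+\|\sigma_n(f)-f\|_\infty\,\|g'\|_{\mathcal{A}^p_\alpha},$$
where each term is a sup norm times an $\mathcal{A}^p_\alpha$-type norm and hence tends to zero. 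This gives $\sigma_n(f)g\to fg$ in $\mathcal{D}^p_\alpha$, so $fg\in[g]_{\mathcal{D}^p_\alpha}$ and $[fg]_{\mathcal{D}^p_\alpha}\subseteq[g]_{\mathcal{D}^p_\alpha}$; only the fact that $\mathcal{D}^p_\alpha\cap\mathcal{A}(\mathbb{D})$ is closed under multiplication (with the combined norm) is used, never the multiplier property on $\mathcal{D}^p_\alpha$. If you replace your ``bounded multiplication operator'' step with this Fej\'er-mean approximation, your argument becomes the paper's.
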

\begin{proof}
It suffices to show that $g$ is cyclic. Let ${\sigma_n}(f)$ denote the Fej\'er means of the partial sums of the power series for $f$. 
Since  the ${\sigma_n}(f)$ converges to $f$ in $\mathcal{D}^{p}_{\alpha}$, ${\sigma_n}(f)g$ converge pointwise to $fg$ in $\D$ and $$
    \|(\sigma_n(f)g-fg)'\|^p_{\mathcal{A}^p_\alpha}\leq  \|(\sigma_n(f)-f)\|_\infty \|g'\|^p_{\mathcal{A}^p_\alpha}+ \|\sigma_n(f)-f\|^p_{\mathcal{D}^p_\alpha}\|g\|_\infty,
$$
we obtain
 ${\sigma_n}g$ converge to $fg$ in $\mathcal{D}^{p}_{\alpha}$, which completes the proof.
\end{proof}

The constant $N$ in the following theorem  is related to that of the Corona Theorem.  If $\alpha=p-2$, we have $N(p-2,p)=5$. 

\begin{theo}\label{bbss}
    Let $p>1$ be such that $\alpha+1\leq p\leq \alpha+2$, there exists $N=N(\alpha,p)$ which depends only on $\alpha$ and $p$ such that if $f,g\in \mathcal{D}^{p}_{\alpha} \cap \mathcal{A}({\mathbb {D}})$ with 
$$
|g(z)|\leq |f(z)|,\qquad z\in \mathbb{D},
$$
then 
$[g^N]_{\mathcal{D}_{\alpha}^{p}}\subset [f]_{\mathcal{D}_{\alpha}^{p}}$ .
\end{theo}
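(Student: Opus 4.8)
The plan is to pass to the quotient algebra and show that $\pi(g)$ is nilpotent of a controlled order. Write $\mathcal{B}=\mathcal{D}^p_\alpha\cap\mathcal{A}(\D)$, which is a Banach algebra under the stated hypotheses, let $[f]_{\mathcal{B}}$ denote the closed ideal generated by $f$ in $\mathcal{B}$, and let $\pi\colon\mathcal{B}\to X:=\mathcal{B}/[f]_{\mathcal{B}}$ be the quotient map. Since polynomials are dense in $\mathcal{B}$ and the $\mathcal{B}$-norm dominates the $\mathcal{D}^p_\alpha$-norm, every product $fh$ with $h\in\mathcal{B}$ is a $\mathcal{D}^p_\alpha$-limit of $z^k f$-combinations, so $[f]_{\mathcal{B}}\subset[f]_{\mathcal{D}^p_\alpha}$. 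Hence it suffices to produce an integer $N=N(\alpha,p)$ with $\pi(g)^N=0$, i.e.\ $g^N\in[f]_{\mathcal{B}}$; then $g^N\in[f]_{\mathcal{D}^p_\alpha}$ and, since the latter is closed and $S$-invariant, $[g^N]_{\mathcal{D}^p_\alpha}\subset[f]_{\mathcal{D}^p_\alpha}$. (If $f$ has no zero in $\overline{\D}$ it is invertible in $\mathcal{B}$, the quotient is trivial, and there is nothing to prove.)

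Next I would locate the spectrum of the element $\pi(g)$ in $X$. Fix $\lambda\neq0$ and set $\delta_\lambda:=\inf_{z\in\D}\big(|g(z)-\lambda|+|f(z)|\big)$. The domination hypothesis $|g|\le|f|$ gives, for every $z\in\D$,
$$|g(z)-\lambda|+|f(z)|\ge\big(|\lambda|-|g(z)|\big)+|f(z)|\ge|\lambda|,$$
so $\delta_\lambda\ge|\lambda|>0$; in particular $g-\lambda$ and $f$ have no common zero in $\overline{\D}$. Applying Tolokonnikov's Corona Theorem (Theorem \ref{cou}) to the pair $(g-\lambda,f)$ produces $h_1,h_2\in\mathcal{B}$ with $(g-\lambda)h_1+fh_2=1$ and $\|h_1\|_{\mathcal{B}}\le\delta_\lambda^{-\bA}$. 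Passing to the quotient, $\pi(g)-\lambda$ is invertible in $X$ with $\|(\pi(g)-\lambda)^{-1}\|\le\|h_1\|_{\mathcal{B}}\le|\lambda|^{-\bA}$. This shows $\sigma_X(\pi(g))\subset\{0\}$, and since $f$ (hence $g$) vanishes somewhere on $\overline{\D}$, the element $\pi(g)$ is not invertible, so in fact $\sigma_X(\pi(g))=\{0\}$.

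Finally I would invoke Corollary \ref{atz}. Let $T$ be multiplication by $1-\pi(g)$ on $X$; by the spectral mapping theorem $\sigma(T)=1-\sigma_X(\pi(g))=\{1\}$, and $T$ is invertible. For $\mu\neq1$, writing $\lambda=1-\mu$ and using $\|M_b\|\le\|b\|$ together with the resolvent bound above, one has $\|(T-\mu I)^{-1}\|\le\|(\pi(g)-\lambda)^{-1}\|\le|1-\mu|^{-\bA}$. Combining this with $|1-\mu|\ge1-|\mu|$ when $|\mu|<1$ and $|1-\mu|\ge|\mu|-1$ when $|\mu|>1$ yields
$$\|(T-\mu I)^{-1}\|\le(1-|\mu|)^{-\bA}\le c_\varepsilon\exp\frac{\varepsilon}{1-|\mu|}\ \ (|\mu|<1),\qquad \|(T-\mu I)^{-1}\|\le(|\mu|-1)^{-\bA}\ \ (|\mu|>1).$$
These are exactly the two hypotheses of Corollary \ref{atz} with $k=[\bA]+1$, so $(I-T)^{[\bA]+1}=\pi(g)^{[\bA]+1}=0$. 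Taking $N=[\bA]+1$, which depends only on $\alpha$ and $p$ and equals $5$ when $\alpha=p-2$ (where $\bA=4$), gives $g^N\in[f]_{\mathcal{B}}$ and finishes the argument. The main obstacle is not any single estimate but the correct packaging: choosing the operator $1-\pi(g)$ so that its spectrum is the singleton $\{1\}$ required by Atzmon, transferring the Corona bound on $h_1$ into resolvent estimates of precisely the sub-exponential/polynomial shape demanded by Corollary \ref{atz}, and reading off the nilpotency index $N=[\bA]+1$ from the polynomial exponent $\bA$.
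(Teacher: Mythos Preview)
Your argument is correct and lands on the same value $N=[\bA]+1$ as the paper, but the final step is packaged differently. The paper applies the Corona Theorem to the pair $(1-\lambda g,\,f)$, obtains $\delta_\lambda\ge 1/(2|\lambda|)$, and hence the entire $X$-valued function $\lambda\mapsto\pi(1-\lambda g)^{-1}$ has growth $O(|\lambda|^{\bA})$; a direct application of Liouville's theorem then forces this function to be a polynomial of degree at most $[\bA]$, so the geometric series $\sum_{n\ge0}\lambda^n\pi(g)^n$ terminates and $\pi(g)^{[\bA]+1}=0$. You instead apply Corona to $(g-\lambda,\,f)$, obtain the sharper bound $\delta_\lambda\ge|\lambda|$ (a nice simplification), deduce the two-sided resolvent estimate $\|(\pi(g)-\lambda)^{-1}\|\le|\lambda|^{-\bA}$, and then route through Atzmon's Corollary~\ref{atz} via the multiplication operator $T=M_{1-\pi(g)}$. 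This works, but it is heavier than necessary: the delicate half of Atzmon's hypothesis is the sub-exponential growth inside the disc, whereas your resolvent bound is already polynomial on \emph{both} sides of the unit circle, so an elementary Liouville argument (applied to $\lambda^{[\bA]+1}(\pi(g)-\lambda)^{-1}$, which is entire and of polynomial growth) would give $\pi(g)^{[\bA]+1}=0$ directly. In short, the paper's route is a one-line Liouville step, while yours reuses the Atzmon machinery from Theorem~\ref{239}; both are valid and yield the identical constant.
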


 \begin{proof}
Let $\lambda\in \mathbb{C}$ and set 
$$\inf_{z\in \mathbb{D}} \Big\{|1-\lambda g(z)|+|f(z)| \Big \}=\delta_\lambda.$$ 
Considering  $\lambda \neq 0$, we have 
 
 \begin{itemize}
\item  If  $\displaystyle |g(z)| \leq \frac{1}{2 |\lambda|}$, then $ |1-\lambda g(z)| \geq 1- |\lambda|| g(z)| \geq \frac{1}{2}$.\\
 
 \item If $\displaystyle |g(z)| \geq \frac{1}{2 |\lambda|}$ then
$\displaystyle
|f(z)| \geq \frac{1}{2|\lambda|} $
\end{itemize}
From this, follows   $$\delta_{\lambda}  \geq  \frac{1}{2|\lambda|}  .$$
According to the  Theorem \ref{cou}, there are $F_\lambda, G_\lambda  \in  \mathcal{D}_{\alpha}^{p} \cap\mathcal{A}({\mathbb{D}})$ such that 
$$\left\{\begin{array}{lll}
 (1-\lambda g)G_\lambda+f F_\lambda=1,\\
\\
 \|F_\lambda\|_{\mathcal{D}_{\alpha}^{p} \cap\mathcal{A}({\mathbb{D}})} \leq \delta_\lambda^{-\bA} \quad \text{ and }\quad   \|G_\lambda\|_{\mathcal{D}_{\alpha}^{p} \cap\mathcal{A}({\mathbb{D}})} \leq \delta_\lambda^{-\bA}
 \end{array}
 \right.
 $$
 for some constant $\bA>4$ .

As before, we consider the canonical  surjection 
$$ \pi : \mathcal{D}_{\alpha}^{p} \cap\mathcal{A}({\mathbb{D}})  \to  \left( \mathcal{D}_{\alpha}^{p} \cap\mathcal{A}({\mathbb{D}})\right) / [f]_{\mathcal{D}_{\alpha}^{p} \cap\mathcal{A}({\mathbb{D}})} .$$ We have   $$ (\pi (1-\lambda g) )^{-1}=\pi(G_\lambda)$$ and 
\begin{align*}
 \|( \pi(1-\lambda g))^{-1} \|_{\mathcal{D}_{\alpha}^{p} \cap \mathcal{A}({\mathbb{D}})/[f]_{\mathcal{D}_{\alpha}^{p}  \cap\mathcal{A}({\mathbb{D}}) }}
 & =\|\pi(G_\lambda)  \|_{ \mathcal{D}_{\alpha}^{p} \cap\mathcal{A}({\mathbb{D}})/[f]_{\mathcal{D}_{\alpha}^{p}  \cap\mathcal{A}({\mathbb{D}})}  } \\
 & \leq \|G_\lambda  \|_{ \mathcal{D}_{\alpha}^{p} \cap\mathcal{A}({\mathbb{D}})  }\\
 &\leq   2^\bA|\lambda|^\bA .
\end{align*}
 By Liouville Theorem,  $\pi(1-\lambda g)^{-1} $ is polynomial of degree  at most $[\bA]$.    Since  $|\lambda g| < 1$, 
$ \pi(1-\lambda g)^{-1} 
=\sum_{  n \geq 0} \lambda^{n} \pi^n ( g) $. We obtain $\pi^{[A]+1}(g)=0$ which means that $g^{[\bA]+1} \in  [f]_{\mathcal{D}_{\alpha}^{p}  \cap\mathcal{A}({\mathbb{D}})}$ and hence  $[g^{[\bA]+1}]_{\mathcal{D}^{p}_{\alpha} }  \subset [f]_{\mathcal{D}^{p}_{\alpha} } $.

\end{proof}

\section{ Refinement of the Theorem  \ref{bbss1} }

We can improve the estimate \eqref{majorationfg} in Theorem \ref{bbss}, and thus obtain a possible more same conclusion. The improved estimate we are looking for is given by the following result.

\begin{theo}\label{rafinement} Let $p>1$ such that  $\alpha+1\leq p\leq \alpha +2$. Let $f,g\in \mathcal{D}_{\alpha}^{p} \cap \mathcal{A}({\mathbb{D}})$. Suppose that ${\rm Re}(g) \geq 0$ and there exists $\gamma >1$  such that. 
\begin{equation}\label{eqd1}
|g(z)|\leq \Big(\log\frac{\|f\|_{\mathcal{D}^p_\alpha \cap \mathcal{A}(\mathbb{D})}}{|f(z)|}\Big)^{-\gamma},\qquad z\in \mathbb{D}
\end{equation}
then $ [g] _{ \mathcal{D}^p_\alpha  } \subset  [f] _{ \mathcal{D}^p_\alpha  } .$  
\end{theo}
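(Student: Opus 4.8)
The strategy mirrors that of Theorem~\ref{bbss}, but instead of applying Liouville's theorem to kill a power of $g$ we will exploit the sharper growth estimate \eqref{eqd1} to show that the resolvent grows \emph{subexponentially} near the unit circle, which via Corollary~\ref{atz} (in the form $k=0$, i.e. the quasianalyticity/Atzmon alternative for the single point spectrum) forces $g$ itself — not merely a power of $g$ — into the ideal $[f]_{\mathcal{D}_{\alpha}^{p}\cap\mathcal{A}(\mathbb{D})}$. Concretely, let $\lambda\in\mathbb{C}$ and set
$$\delta_\lambda=\inf_{z\in\mathbb{D}}\Big\{|1-\lambda g(z)|+|f(z)|\Big\}.$$
As in the proof of Theorem~\ref{bbss}, I first bound $\delta_\lambda$ from below. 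The new ingredient is that ${\rm Re}(g)\geq 0$, so the values of $g$ lie in the right half-plane; writing $w=\lambda g(z)$, if $|1-w|$ is small then $w$ is near $1$, hence $|g(z)|$ is bounded below and above by constants times $1/|\lambda|$, and in the regime where $|g(z)|$ is \emph{large} one uses \eqref{eqd1} in reverse: $|g(z)|\le (\log(\|f\|/|f(z)|))^{-\gamma}$ forces $\log(\|f\|/|f(z)|)\le |g(z)|^{-1/\gamma}$, i.e. $|f(z)|\ge \|f\|\exp(-|g(z)|^{-1/\gamma})$. Combining the two regimes should give a lower bound of the shape
$$\delta_\lambda\ \gtrsim\ \exp\!\big(-c\,|\lambda|^{1/\gamma}\big)$$
for $|\lambda|$ large, where $c$ depends on $\|f\|_{\mathcal{D}^p_\alpha\cap\mathcal{A}(\mathbb{D})}$ and $\gamma$ only — the half-plane condition on $g$ is what prevents $1-\lambda g$ and $f$ from being simultaneously small except when $|\lambda|$ controls the size of $g$.

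Next I invoke Tolokonnikov's Corona Theorem~\ref{cou} applied to the pair $(1-\lambda g,\, f)$: there exist $G_\lambda,F_\lambda\in\mathcal{D}_{\alpha}^{p}\cap\mathcal{A}(\mathbb{D})$ with $(1-\lambda g)G_\lambda+fF_\lambda=1$ and $\|G_\lambda\|_{\mathcal{D}_{\alpha}^{p}\cap\mathcal{A}(\mathbb{D})}\le\delta_\lambda^{-\bA}$. Passing to the quotient algebra $B=\big(\mathcal{D}_{\alpha}^{p}\cap\mathcal{A}(\mathbb{D})\big)/[f]_{\mathcal{D}_{\alpha}^{p}\cap\mathcal{A}(\mathbb{D})}$ via the canonical surjection $\pi$, the element $\pi(g)$ becomes quasinilpotent: indeed $\pi(1-\lambda g)$ is invertible for every $\lambda\in\mathbb{C}$ with
$$\big\|\big(\pi(1-\lambda g)\big)^{-1}\big\|_B\ \le\ \|G_\lambda\|_{\mathcal{D}_{\alpha}^{p}\cap\mathcal{A}(\mathbb{D})}\ \le\ \delta_\lambda^{-\bA}\ \lesssim\ \exp\!\big(\bA\,c\,|\lambda|^{1/\gamma}\big),$$
so $\lambda\mapsto(\pi(1-\lambda g))^{-1}$ is an entire $B$-valued function of order $1/\gamma<1$. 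The key step is then to extract from this the conclusion $\pi(g)=0$, i.e. $g\in[f]_{\mathcal{D}_{\alpha}^{p}\cap\mathcal{A}(\mathbb{D})}$. One route: for $|\lambda|<1$ the series $(\pi(1-\lambda g))^{-1}=\sum_{n\ge0}\lambda^n\pi(g)^n$ shows $\|\pi(g)^n\|_B\le$ (by Cauchy estimates applied on circles $|\lambda|=R$) $\le R^{-n}\exp(\bA c R^{1/\gamma})$; optimizing over $R$ gives $\|\pi(g)^n\|_B^{1/n}\to 0$ faster than any geometric rate — in fact $\|\pi(g)^n\|_B\le \exp(-c' n^{\gamma/(\gamma-1)} \cdot n^{-1}\cdot n) $-type decay — but this only re-proves quasinilpotence. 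To get $\pi(g)=0$ outright I would instead apply an Atzmon-type argument directly to the operator of multiplication by $\pi(g)$ on a suitable space, or more cleanly: note $\sigma(\pi(g))=\{0\}$ and the resolvent bound $\|(\pi(g)-\mu)^{-1}\|=|\mu|^{-1}\|(\pi(1-\mu^{-1}g))^{-1}\|\lesssim |\mu|^{-1}\exp(\bA c\,|\mu|^{-1/\gamma})$ as $\mu\to 0$, which is an $O(\exp(|\mu|^{-\beta}))$ bound with $\beta=1/\gamma<1$; by Atzmon's theorem (as used for Corollary~\ref{atz}, now in the $k=0$, subexponential case) this forces $\pi(g)=0$.

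\textbf{Main obstacle.} The delicate point is the last step — upgrading ``$\pi(g)$ quasinilpotent with a resolvent of order $\beta<1$ at $0$'' to ``$\pi(g)=0$''. Corollary~\ref{atz} as stated requires a \emph{two-sided} growth control: the exponential-type bound $\exp(\varepsilon/(1-|z|))$ for $|z|<1$ \emph{and} a polynomial bound for $|z|>1$; here the natural variable is $\mu$ near $0$ (equivalently $\lambda=\mu^{-1}$ near $\infty$), so I must recast the estimates so that Atzmon's theorem applies to $T=\pi(g)$ viewed with spectrum $\{0\}$ rather than $\{1\}$, checking that the entire function $\lambda\mapsto(\pi(1-\lambda g))^{-1}$ of order $1/\gamma<1$ and the behaviour near $\lambda=\infty$ together satisfy the hypotheses. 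The exponent $\gamma>1$ is exactly what makes the order strictly less than $1$, which is the borderline for the relevant quasianalyticity (Levinson-type) condition; if $\gamma=1$ one would only recover $\exp(c|\lambda|)$, order exactly $1$, and the argument would break — so tracking the strict inequality carefully through the optimization is essential. A secondary, more routine difficulty is verifying that the lower bound for $\delta_\lambda$ genuinely has the claimed form $\exp(-c|\lambda|^{1/\gamma})$ uniformly in $\lambda$, using ${\rm Re}(g)\ge0$ to handle the ``boundary'' case $|1-\lambda g(z)|$ small and \eqref{eqd1} to handle the case $|g(z)|$ large; the constant must be controlled purely by $\|f\|_{\mathcal{D}^p_\alpha\cap\mathcal{A}(\mathbb{D})}$ and $\gamma$, which is where the normalization by $\|f\|$ inside the logarithm in \eqref{eqd1} earns its keep.
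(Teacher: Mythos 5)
Your setup matches the paper's up to the point where the real work begins: the lower bound $\delta_\lambda\ge e^{-(2|\lambda|)^{1/\gamma}}$ comes from exactly the dichotomy you describe ($|g(z)|\le \frac{1}{2|\lambda|}$ versus $|g(z)|\ge\frac{1}{2|\lambda|}$, the second case fed into \eqref{eqd1}), and note that this bound does not use ${\rm Re}(g)\ge 0$ at all — you have misplaced where that hypothesis enters. The Corona step and the conclusion that $\lambda\mapsto(\pi(1-\lambda g))^{-1}$ is entire of order $1/\gamma<1$ are also as in the paper. The genuine gap is the final step, and the route you propose to close it is false: a resolvent bound $\|(\pi(g)-\mu)^{-1}\|=O(\exp(|\mu|^{-\beta}))$ with $\beta<1$ at the single spectral point $0$ does \emph{not} force $\pi(g)=0$. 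Any nonzero element $T$ of a Banach algebra with $T^2=0$ has $(T-\mu)^{-1}=-\mu^{-1}-\mu^{-2}T$, hence a polynomially bounded resolvent near $0$, which is far better than subexponential. Atzmon-type results of the kind behind Corollary \ref{atz} convert resolvent growth at an isolated spectral point into a \emph{nilpotency order}; no growth condition of this purely local type can distinguish $T=0$ from $T^2=0$, so this step cannot be repaired within your framework. Your own Cauchy-estimate computation correctly shows that the order-$1/\gamma$ bound alone yields only quasinilpotence, which is strictly weaker than the theorem.

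The missing idea — and the only place the paper uses ${\rm Re}(g)\ge0$ — is a Phragm\'en--Lindel\"of argument on two complementary sectors. Choose $\theta_\gamma$ with $\frac{\pi}{2}(2-\gamma)<\theta_\gamma<\frac{\pi}{2}\gamma$ and $\frac{\pi}{2}<\theta_\gamma<\pi$; such a $\theta_\gamma$ exists precisely because $\gamma>1$, and this is where the strict inequality is spent. On the two rays $\arg\lambda=\pm\theta_\gamma$ one has ${\rm Re}(\lambda)\le0$, hence ${\rm Re}\big(\frac{1}{\lambda}-g(z)\big)\le0$ and
$|1-\lambda g(z)|=|\lambda|\,\big|\frac{1}{\lambda}-g(z)\big|\ge \frac{|{\rm Re}(\lambda)|}{|\lambda|}=|\cos\theta_\gamma|$,
so $\delta_\lambda$ is bounded below by a constant on these rays and the scalar functions $\varphi(\lambda)=\langle(\pi(1-\lambda g))^{-1},\ell\rangle$ ($\|\ell\|=1$) are uniformly bounded there. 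The two sectors cut out by the rays have openings $2\theta_\gamma$ and $2\pi-2\theta_\gamma$, both strictly less than $\pi\gamma$, so Phragm\'en--Lindel\"of applied to each sector (using the order-$1/\gamma$ growth) shows $\varphi$ is bounded on all of $\mathbb{C}$, hence constant by Liouville. Therefore $(\pi(1-\lambda g))^{-1}\equiv\pi(1)$, and expanding the Neumann series $\sum_{n\ge0}\lambda^n\pi(g)^n$ for small $|\lambda|$ gives $\pi(g)=0$, i.e.\ $g\in[f]_{\mathcal{D}_{\alpha}^{p}\cap\mathcal{A}(\mathbb{D})}$. Without this sector argument your proof establishes only quasinilpotence of $\pi(g)$.
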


\begin{proof}

We assume that $\|f\|_{\mathcal{D}^p_\alpha \cap \mathcal{A}(\mathbb{D})}=1.$
Let $\lambda\in \mathbb{C}$, we set
$$\inf_{z\in \mathbb{D}} \Big\{|1-\lambda g(z)|+|f(z)| \Big \}=\delta_\lambda.$$
Considering $\lambda\neq 0$, we distinguish two cases:\\

\begin{itemize}

 \item If  $\displaystyle |g(z)| \leq \frac{1}{2 |\lambda|}$  then  $ |1-\lambda g(z)| \geq 1- |\lambda|| g(z)| \geq \frac{1}{2}$. \\
 \item If  $\displaystyle|g(z)| \geq \frac{1}{2 |\lambda|} $, then by \eqref{eqd1}
$$|f(z)| \geq e^{-\left(2|\lambda| \right)^{\frac{1}{\gamma}}}.$$
\end{itemize}
From this follows 
 $$\delta_{\lambda}  \geq e^{-(2|\lambda|)^{\frac{1}{\gamma}}}  .$$
By Theorem \ref{cou}, there exists  $F_\lambda, \; G_\lambda  \in  \mathcal{D}_{\alpha}^{p} \cap\mathcal{A}({\mathbb{D}})$ such that 
$$\left\{\begin{array}{lll}
 (1-\lambda g)G_\lambda+f F_\lambda=1,\\
\\
 \|F_\lambda\|_{\mathcal{D}_{\alpha}^{p} \cap\mathcal{A}({\mathbb{D}})} \leq \delta^{-\bA} \quad \text{ and }\quad   \|G_\lambda\|_{\mathcal{D}_{\alpha}^{p} \cap\mathcal{A}({\mathbb{D}})} \leq \delta^{-\bA}
 \end{array}
 \right.
 $$
for some constant $\bA\geq 4$.

Let $\pi$ be the canonical surjection 
$$\pi   : \mathcal{D}_{\alpha}^{p} \cap\mathcal{A}({\mathbb{D}}) \to \left( \mathcal{D}_{\alpha}^{p} \cap\mathcal{A}({\mathbb{D}})\right) / [f]_{\mathcal{D}_{\alpha}^{p} \cap\mathcal{A}({\mathbb{D}})}.
 $$
  We have 
 $$\pi (1-\lambda g)^{-1}=\pi(G_\lambda)$$ 
 and 
\begin{align*}
 \|  \pi(1-\lambda g)^{-1} \|_{\mathcal{D}_{\alpha}^{p} \cap \mathcal{A}({\mathbb{D}})/
 [f]_{\mathcal{D}_{\alpha}^{p}  \cap\mathcal{A}({\mathbb{D}})    }}
 & =\|\pi(G_\lambda)  \|_{ \mathcal{D}_{\alpha}^{p} \cap\mathcal{A}({\mathbb{D}})/[f]_{\mathcal{D}_{\alpha}^{p} }  } \\
 & \leq \|G_\lambda  \|_{ \mathcal{D}_{\alpha}^{p} \cap\mathcal{A}({\mathbb{D}})  }\\
 & \leq \frac{1}{\delta_{\lambda}^{\bA}} \leq e^{\bA (2|\lambda|)^{\frac{1}{\gamma}}}.
\end{align*}
Let $\ell \in ( { \mathcal{D}_{\alpha}^{p}\cap\mathcal{A}({\mathbb{D}})/[f]_{\mathcal{D}_{\alpha}^{p}\cap\mathcal{A}({\mathbb{D}})  }})^{*}$  with  norm $\|\ell\|=1$ and define $\varphi$ by $$
\varphi(\lambda)=   \langle (\pi(1-\lambda g))^{-1}, \ell \rangle.$$
 The function $\varphi $ is analytic on  $\mathbb{C}$ and 
\begin{align}\label{inPL}
| \varphi (\lambda) |
&  \leq e^{c|\lambda|^{\frac{1}{\gamma}}} 
\end{align}
where $c= 2^\frac{1}{\gamma}\bA$.  Since  $\gamma>1$, there exists $\theta  _\gamma$ such that  $\displaystyle \frac{\pi}{2} (2-\gamma) <\theta_\gamma <\frac{\pi}{2} \gamma$. We suppose that  $\theta  _\gamma<\pi$. 
Consider now te sector $\mathcal{S}_{\theta  _\gamma}= \{\lambda \in \mathbb{C} \text{ :  }|\arg \lambda| <\theta  _\gamma\}. $

{

\begin{figure}[!ht]
\centering
\setlength{\unitlength}{3cm}
\begin{picture}(1,1)
\put(0,0){\line(0,1){1}}
\put(0,0){\line(1,0){1}}
\put(0,0){\line(0,-1){1}}
\put(0,0){\line(-1,0){1}}
\put(-0.4,0.9){$\partial S_{\theta  _\gamma}$}
\put(-0.4,-0.9){$\partial S_{\theta  _\gamma}$}
\put(0,0){\line(-1,2){.5}}
\put(0,0){\line(-1,2){.5}}
\put(0,0){\line(-1,-2){.5}}
\put(0,0){\line(-1,-2){.5}}
\put(0.5,0.5){$S_{\theta  _\gamma}$}
\end{picture}
\vspace{7em}
\small \caption{le secteur $S_{\theta  _\gamma}$}\label{fig}
\end{figure}
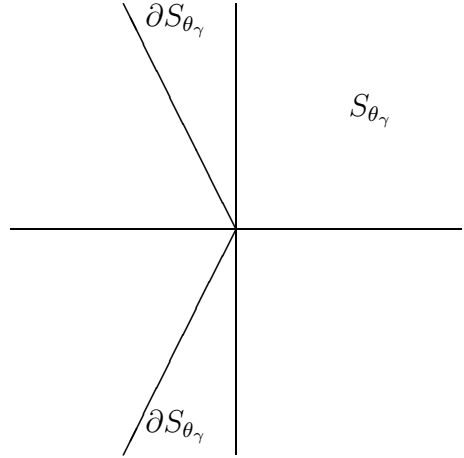
}

Let  $\lambda \in \partial \mathcal{S}_{\theta  _\gamma} $, since  $\pi/2<\theta  _\gamma<\pi$,   $\text{\rm Re}(\lambda)\leq 0 $ and  $\text{\rm Re}\displaystyle (\frac{1}{\lambda}-g(z))\leq 0$. We obtain 
\begin{align*}
|1-\lambda g(z)|
& = |\lambda||\frac{1}{\lambda}- g(z)| \\
& \geq |\lambda||{\rm Re} (\frac{1}{\lambda}- g(z))| \\
&\geq |\lambda|\frac{|{\rm Re} (\lambda)|}{|\lambda|^{2}} =\frac{|{\rm Re} (\lambda)|}{|\lambda|}.
\end{align*}

Moreover, $\lambda \in \partial \mathcal{S}_{\theta  _\gamma} $, hence 
$$  {\rm Re}(\lambda)= |\lambda| \cos\theta  _\gamma.$$
If we set $C_\gamma = |\cos\theta  _\gamma|^{-1}\neq 0$, we get
  $$\frac{1}{|1-\lambda g(z)|} \leq \frac{|\lambda|}{|{\rm Re} (\lambda)|}=C_\gamma.$$ 
Then $\varphi$ is analytic on $\mathcal{S}_{\theta  _\gamma}$, continuous on  $\overline{\mathcal{S}_{\theta  _\gamma}}$   and satisfies 
$$\left\{\begin{array}{ccccl}
 |\varphi(\lambda)| &\leq &e^{ c|\lambda|^{\frac{1}{\gamma}}} & \text{ for  } & \lambda \in\mathcal{S}_{\theta  _\gamma}  \\
&&&\\
|\varphi(\lambda)| &\leq &C_\gamma & \text{ for }  &\lambda \in \partial \mathcal{S}_{\theta  _\gamma} .
 \end{array}
 \right.
 $$

Since  $\displaystyle \frac{1}{\gamma}<\frac{\pi}{2\theta  _\gamma}$ with $\displaystyle \theta  _\gamma < \frac{\pi}{2}\gamma$,  by the Phragm\'en–Lindel\"of principle for a sector $\mathcal{S}_{\theta  _\gamma}$, we have 
  $$
|\varphi(\lambda)| \leq C_\gamma,  \qquad   \lambda \in  \mathcal{S}_{\theta  _\gamma}. $$
The function $\varphi$ is  an entire function  and  satisfies \eqref{inPL} on  $\mathbb{C}$.  Again using the Phragmén–Lindelöf principle for a sector
$$\mathcal{S}=\mathbb{C}\setminus  \mathcal{S}_{\theta  _\gamma}=\{\lambda\in \mathbb{C}\text{ : } \theta  _\gamma<\arg(\lambda)<2\pi-\theta  _\gamma\}.$$
Since  $2\pi-2\theta  _\gamma$ we get 
$$\left\{\begin{array}{ccccl}
 |\varphi(\lambda)| &\leq &e^{c|\lambda|^{\frac{1}{\gamma}}} & \text{ on }  & \lambda \in\mathcal{S}  \\
&&&\\
|\varphi(\lambda)| &\leq &C_\gamma & \text{sur}  &\lambda \in \partial \mathcal{S} 
 \end{array}
 \right.
 $$ 
Since  $\theta _\gamma>\displaystyle \frac{\pi}{2} (2-\gamma) $,   $\displaystyle \frac{1}{\gamma}<\frac{\pi}{2\pi-2\theta_\gamma}$ and 
$$| \varphi (\lambda) | \leq   C_\gamma \qquad \lambda \in  \mathcal{S}. $$
Then  $\varphi$  is bounded on  $\mathbb{C}$,  By  Liouville Theorem, $\varphi$ is a constant function
$$\varphi(\lambda)=\varphi(0)=\langle  \pi^{-1}(1),\ell \rangle,\qquad \lambda\in \mathbb{C}.$$
Thus, $\pi^{-1}(1-\lambda g)= \pi^{-1}(1)=\pi(1)$. For $|\lambda g| < 1$, we have 
$
\pi(1)
 =\pi^{-1}(1-\lambda g)
=\sum_{  n \geq 0} \lambda^{n} \pi^{n}(g)$.  Consequently
 $\pi(g)=0$ and $ g \in  [f]_{\mathcal{D}_{\alpha}^{p}\cap\mathcal{A}(\mathbb{D}) }$, hence $[g]_{\mathcal{D}^{p}_{\alpha} }  \subset [f]_{\mathcal{D}^{p}_{\alpha} }  $.

\end{proof}

\begin{rema} We will construct two functions $f$ and $g$ that satisfy the condition of the Theorem \ref{rafinement}, this answers a question of  Sasha Borichev.  A closed set $E$ of the unit circle is said to be 
 $K$-set  (after Kotochigov), if there exists a positive constant $c_E$ such that  for any arc $I\subset \T$ 
$$\sup_{\zeta\in I} \dist(\zeta, E) \geq c_E| I|$$
where $I$ denotes the length of $I$. K-sets arise as the interpolation sets for  H\"older classes, as example the generalized cantor set \cite{24,25},  we refer to \cite{B,D} for more details.  Such a set fulfils the following condition 
$$\frac{1}{|I|}\int_I \frac{|\zeta|}{\dist(\zeta,E)^\alpha}\le |I|^{-\sigma}$$
for 
$$\sigma < \Big(\log\big(\frac{1}{1-c_E}\big)\Big)\Big/ \Big(\log\big(\frac{2}{1-c_E}\big)\Big).$$
In particular, $E$  has measure zero and $\log \dist(\zeta, E) \in L^1(\T)$. Let $p>1$ such that  $\alpha+1\leq p\leq \alpha +2$. Let us now  consider the outer function  
$$|g(\zeta)|=\dist(\zeta,E)^\beta,\qquad \zeta\in \T.$$ 
Since $E$ is $K$-set,  by \cite{B}, $\Re g(z)>0$,  
$$\Re g(z)\asymp |g(z)|\asymp \dist(z,E)^\beta \quad \text{ and }  
\quad |g'(z)|\asymp \dist(z,E)^{\beta-1},\qquad z\in \D.$$
If $1/(2+\alpha)<\beta <1$, then 
$$\mathcal{D}_{\alpha,p}(g)\asymp \int_\D\frac{dA_\alpha(z)}{\dist(z,E)^{p(1-\beta)}}\lesssim \int_{0}^{1}\frac{dr}{(1-r)^{(\alpha+2)(1-\beta)-\alpha}}<\infty.$$
so   $g\in \mathcal{A}(\D)\cap  \mathcal{D}_{\alpha}^{p}$. Now let  $1/\gamma=\kappa$
 $$f(z)=\exp(-1/g^\kappa(z)), \qquad z\in \D.$$
We have $\displaystyle f'(z)=\kappa\frac{g'(z)}{g(z)^{\kappa+1}}\exp(-1/g^\kappa(z))$. Thus $|f'(z)|\leq |g'(z)|$ and $f\in \mathcal{D}_{\alpha}^{p}$. \\

Let us conclude this work with a final remark. Denote by $c_0$ the logarithmic capacity and by  $c_\alpha$ the $\alpha$-capacity for  $0<\alpha<1$. The case of Dirichlet spaces $\mathcal{D}_{\alpha}^{2}$, $0\leq \alpha<1$,  was studied in \cite{24,25}. In particular, it was shown in that 
if $f\in \mathcal{D}_{\alpha}^{2}\cap \mathcal{A}(\D)$,  is an outer function such that $\mathcal{Z}(f) $  is a generalized cantor set,  then $f$ is cyclic in $ \mathcal{D}_{\alpha}^{2}$ 
if and only if $c_\alpha(\mathcal{Z}(f))=0 $. We do not know if this result also holds for $ \mathcal{D}_{\alpha}^{p}\cap \mathcal{A}(\D)$.\\
\end{rema} 
{\bf Acknowledgments. }The authors are grateful to Sasha Borichev  for helpful discussions and for pertinent remarks.

\end{document}